\theoremstyle{plain}
\newtheorem{thm}{Theorem}[section]
\newtheorem{lemma}[thm]{Lemma}
\theoremstyle{remark}
\theoremstyle{definition}
\newtheorem{example}[thm]{Example}
 \newcommand\Cpx{{\mathbb C}}
 \newcommand\Mcal{{\mathcal{M}}}
\newcommand\mut{{\tilde{\mu}}}
 \newcommand\restrict{{\upharpoonright}}
\newcommand\DT{\operatorname{DT}}
\newcommand\eps{\epsilon}
\newcommand\Nats{{\mathbf N}}
\def\today{{\number\day\space
 \ifcase\month\or
  January\or February\or March\or April\or May\or June\or
  July\or August\or September\or October\or November\or December\fi
 \space\number\year}}
\newcommand\HEu{{\EuScript H}}                   
\newcommand\supp{\operatorname{supp}}
\begin{document}

\title[]{Some Non-spectral DT-operators in finite von Neumann algebras, II: measures with atoms}

\author[Dykema]{Ken Dykema}
 \address{Ken Dykema, Department of Mathematics, Texas A\&M University, College Station, TX 77843-3368, USA.}
 \email{kdykema@math.tamu.edu}
%

\author[Krishnaswamy-Usha]{Amudhan Krishnaswamy-Usha}
\address{Amudhan Krishnaswamy-Usha}
\email{kuamudhan@gmail.com}

\subjclass[2020]{47C15, 47A11, 47B40}

\keywords{finite von Neumann algebra, Haagerup-Schultz projection, spectrality, DT-operator}

\begin{abstract} 
Results about angles between Haagerup--Schultz projections for DT-operators whose measures have atoms are proved, which in some cases imply that such operators are non-spectral.
Several examples are considered.
\end{abstract}

\date{\today}

\maketitle

\section{Introduction}

This note is a continuation of the paper~\cite{DKU}, to which we refer, together with~\cite{DKU21}, for a more  detailed introduction.
Here is a brief recap.
Consider a finite von Neumann algebra, namely, a von Neumann algebra $\Mcal\subseteq B(\HEu)$ that is equipped with a normal, faithful, tracial state $\tau$.
Given $T\in \Mcal$ and a Borel set $B\subseteq\Cpx$, the {\em Haagerup--Schultz projection}, $P(T,B)$, introduced in~\cite{HS09}, is an invariant projection for $T$,
with $P(T,B)\in\Mcal$, that is well behaved in connection with the Brown measure associated to the trace $\tau$.
An operator $T\in\Mcal$ is said to be {\em spectral} if it is similar (in $\Mcal$, as it happens) to the sum of a normal operator $N$ and a quasi-nilpotent operator $Q$ that commute.
The notion goes back to Dunford~\cite{D54}, where it is defined in terms of spectral decomposition.
In~\cite{DKU21}, the authors explored the relationship, for elements of $\Mcal$, between spectrality, decomposability and properties of their Haagerup--Schultz projections.
In particular, spectral operators must satisfy the {\em nonzero angle property} (NZA), which entails that for every Borel set, the angle, denoted $\alpha(P(T,B),P(T,B^c))$,
between the closed subspaces $P(T,B)\HEu$ and $P(T,B^c)\HEu$, is strictly positive.
Even more, spectral operators must satisfy the {\em uniformly nonzero angle property} (UNZA), which entails that
\[
\inf_{\{B\,:\,0<\mu(B)<1 \}} \alpha(P(T,B),P(T,B^c))>0,
\]
where the condition on $B$ is to ensure $P(T,B)\neq 0$ and  $P(T,B^c)\neq 0$.
Clearly, UNZA implies NZA.
We lack an example of an operator that is known to have the NZA but fails the UNZA.
Note that by Corollary~4.9 of~\cite{DKU21}, $T\in\Mcal$ is spectral if and only if it is decomposable and satisfies the UNZA property.

In~\cite{DKU21}, Voiculescu's circular operator and, more generally, circular free Poisson operators, were shown to fail the NZA and, thus, to be non-spectral.
These operators belong to the larger class of DT-operators (introduced in~\cite{DH04a}), that have natural models in upper triangular random matrices.
In~\cite{DKU}, different arguments were used to show that some other DT-operators fail the UNZA property and are non-spectral.
For every compactly supported Borel probability measure $\mu$ on $\Cpx$ and every $c>0$, by a $\DT(\mu,c)$-operator $Z$, we mean $Z\in\Mcal$ whose $*$-moments are
given by a specific formula in terms of $\mu$ and~$c$ (see~\cite{DH04a} or Section~4 of~\cite{DKU} for details).
Every DT-operator is decomposable, the Brown measure of a $\DT(\mu,c)$-operator is $\mu$ and the spectrum of the operator is the support of $\mu$ (see~\cite{DH04a}).
The case of Voiculsecu's circular operator corresponds to when $\mu$ is the uniform measure on the disk of radius $1$ (centered at the origin) and $c=1$, while the circular free Poisson
operators correspond to uniform distributions on annuli centered at the origin.
The operators treated in~\cite{DKU} were those whose measures $\mu$ are radially symmetric and have certain concentration properties.
While it is clear (from the fact that DT-operators are decomposable and Theorem~4.2 and Corollary~4.9 of~\cite{DKU21}) that every $\DT(\mu,c)$-operator whose measure $\mu$ is
supported on only finitely many points is spectral,
it is a open question whether all other DT-operators are non-spectral.

In this paper, we treat $\DT(\mu,c)$-operators whose measures $\mu$ have atoms.
Though our Lemma~\ref{lem:0A} is quite close to a result in ~\cite{DKU},
our main results (Theorems~\ref{thm:atom} and~\ref{thm:manyatoms}) don't follow from the results of~\cite{DKU}.
Finally, let us mention that our Examples~\ref{ex:AnotB} and~\ref{ex:no}, and also similar ones, are appealing objects of further investigation.

\section{Results}

For $0\le r<s<\infty$, $A(r,s)$ denotes the closed annulus (or disk, when $r=0$)
\[
A(r,s)=\{z\in\Cpx\mid r\le |z|\le s\}.
\]

What follows is an analogue of Lemma~6.2 of~\cite{DKU}.
Though it does not precisely follow from that lemma, its proof is nearly identical.
For completeness, we include its proof here.
\begin{lemma}\label{lem:0A}
Let $0<s'<s$ and $c>0$.  Let $Z$ be a $\DT(\mu,c)$-operator whose measure $\mu$ is concentrated in $\{0\}\cup A(s',s)$.
Let $t=\mu(\{0\})$ and suppose $0<t<1$.
Then 
\begin{multline}
\cos \left( \alpha(P(Z,\{0\}),P(Z,A(s',s)))\right)
\geq \left(1+\frac{s^2}{c^2\max(t,1-t)} \right)^{-1/2} \\
\geq \left(1+\frac{2s^2}{c^2} \right)^{-1/2}. 
\label{eq:aZlbd}
\end{multline}
\end{lemma}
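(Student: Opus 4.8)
The plan is to convert the angle bound into a lower bound on the norm of a single operator, and then estimate that norm by a trace computation. Set $p=P(Z,\{0\})$ and $p'=P(Z,A(s',s))$, the orthogonal Haagerup--Schultz projections, with ranges $V_0=p\HEu$ and $V_1=p'\HEu$; these are $Z$-invariant, and $\tau(p)=t$, $\tau(p')=1-t$. Because the Brown measure of $Z$ splits between the disjoint sets $\{0\}$ and $A(s',s)$, the two invariant subspaces are in direct sum, $\HEu=V_0\dotplus V_1$, so $V_1$ is the graph $V_1=\{\,Gb+b : b\in(1-p)\HEu\,\}$ of a bounded operator $G=pG(1-p)\colon(1-p)\HEu\to p\HEu$. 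Computing the oblique idempotent onto $V_0$ along $V_1$ and using the standard identity relating its norm to the angle gives $\cos\bigl(\alpha(p,p')\bigr)=\norm{pp'}=(1+\norm{G}^{-2})^{-1/2}$. Hence both displayed inequalities are equivalent to the single estimate $\norm{G}^{2}\ge c^{2}\max(t,1-t)/s^{2}$ (the second then following from $\max(t,1-t)\ge\tfrac12$), and the whole problem reduces to bounding $\norm{G}$ from below.

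Next I would bring in the Dykema--Haagerup structure of $Z$. In the block form with respect to $V_0\oplus V_0^{\perp}$, write $Z_{11}=pZp$, $Z_{12}=pZ(1-p)$, $Z_{22}=(1-p)Z(1-p)$; invariance of $V_1$ is exactly the Sylvester equation $GZ_{22}-Z_{11}G=Z_{12}$. Here $Z_{11}$ is a $\DT(\delta_0,c\sqrt{t})$-operator, hence quasinilpotent and rotation invariant in $*$-distribution, while $Z_{22}$ is a $\DT$-operator with Brown measure $\mu_2$ supported in $A(s',s)\subseteq\{|z|\le s\}$, hence invertible. Quasinilpotence of $Z_{11}$ makes the Neumann-type series $G=\sum_{k\ge0}Z_{11}^{k}Z_{12}Z_{22}^{-(k+1)}$ converge in norm and solve the equation. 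The key algebraic input is the covariance identity for the off-diagonal corner, $\tau(Z_{12}^{*}WZ_{12}Y)=c^{2}t(1-t)\,\tau_{p}(W)\,\tau_{1-p}(Y)$ for $W\in p\Mcal p$ and $Y\in(1-p)\Mcal(1-p)$, where $\tau_{p}$ and $\tau_{1-p}$ are the normalized traces on the corners; in particular $\tau(Z_{12}^{*}Z_{12})=c^{2}t(1-t)$. Feeding the series into $\tau(G^{*}G)$ and applying this identity, the rotation invariance of $Z_{11}$ (which forces $\tau_{p}((Z_{11}^{*})^{l}Z_{11}^{k})=0$ unless $k=l$) kills all cross terms, leaving
\[
\tau(G^{*}G)=c^{2}t(1-t)\sum_{k\ge0}\tau_{p}\bigl(|Z_{11}^{k}|^{2}\bigr)\,\tau_{1-p}\bigl(|Z_{22}^{-(k+1)}|^{2}\bigr)\ \ge\ c^{2}t(1-t)\,\tau_{1-p}\bigl(|Z_{22}^{-1}|^{2}\bigr),
\]
retaining only the nonnegative $k=0$ term.

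It then remains to bound $\tau_{1-p}(|Z_{22}^{-1}|^{2})$ below by $1/s^{2}$ and to pass from $\tau(G^{*}G)$ to $\norm{G}$. For the former, operator Jensen for the trace-state (convexity of $\exp$) gives $\tau_{1-p}(|Z_{22}^{-1}|^{2})\ge\exp\bigl(\tau_{1-p}\bigl(\log|Z_{22}^{-1}|^{2}\bigr)\bigr)$, and the Fuglede--Kadison identity together with $|z|\le s$ on $\supp\mu_{2}$ yields $\tau_{1-p}\bigl(\log|Z_{22}^{-1}|^{2}\bigr)=-2\int\log|z|\,d\mu_{2}(z)\ge-2\log s$, so $\tau_{1-p}(|Z_{22}^{-1}|^{2})\ge s^{-2}$. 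For the latter, since $GG^{*}\in p\Mcal p$ and $G^{*}G\in(1-p)\Mcal(1-p)$, the operator norm dominates either corner trace, giving $\norm{G}^{2}\ge\tau(G^{*}G)/\min(t,1-t)$. Combining, $\norm{G}^{2}\ge c^{2}t(1-t)/(\min(t,1-t)s^{2})=c^{2}\max(t,1-t)/s^{2}$, exactly what the reduction needs. I expect the main obstacle to be the middle paragraph: rigorously justifying the covariance identity --- equivalently, that the off-diagonal corner of a DT-operator is an operator-valued circular element of the stated variance, free from the diagonal corners --- and the term-by-term evaluation of $\tau(G^{*}G)$ through the convergent series. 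This is precisely the ingredient running parallel to Lemma~6.2 of~\cite{DKU}; by comparison, the angle-to-$\norm{G}$ dictionary and the Fuglede--Kadison/Jensen estimate are routine.
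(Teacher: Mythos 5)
Your proposal has a genuine gap, and it sits exactly where you wave it off as routine. You justify $\HEu=V_0\dotplus V_1$ --- and hence the existence of a \emph{bounded} $G$ with $V_1=\{Gb+b\}$ --- by saying that the Brown measure splits between the two disjoint sets. That implication is false. For Haagerup--Schultz projections of complementary Borel sets one gets only $V_0\cap V_1=\{0\}$ and $\overline{V_0+V_1}=\HEu$; the algebraic direct sum (equivalently, boundedness of the oblique idempotent, equivalently $\cos\alpha<1$ with the supremum attained away from $1$) is precisely the nonzero-angle property whose failure this paper is about, and it genuinely fails in the setting of Theorem~\ref{thm:atom}, where the Brown measure also splits between $\{0\}$ and $\Cpx\setminus\{0\}$. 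In the lemma's setting the direct sum does hold, but only because of the spectral gap between $\{0\}$ and $A(s',s)$, and establishing it forces your second paragraph to run in the opposite logical order: define $G$ by the Neumann series (legitimate once $Z_{11}$ is quasinilpotent and $Z_{22}$ invertible), check $S=1+G$ is invertible and conjugates the block-diagonal operator to $Z$, and \emph{then} identify the graph of $G$ with $P(Z,A(s',s))\HEu$ and $p\HEu$ with $P(Z,\{0\})\HEu$. That identification is not automatic; the paper gets it from the similarity-invariance of Haagerup--Schultz projections (Proposition~2.6(iii) of~\cite{DKU}), and nothing in your proposal plays this role. The same missing bridge affects your structural claims: Theorem~4.2 of~\cite{DKU} produces \emph{some} realization of a $\DT(\mu,c)$-operator in upper-triangular form with semicircular off-diagonal corner, and to know that the corners cut by the \emph{actual} projection $P(Z,\{0\})$ carry the distributions and covariance identity you use, one must first show that the realization's projection coincides with $P(Z,\{0\})$ --- again the similarity (or uniqueness-of-HS-projections) argument. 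So the ``angle-to-$\norm{G}$ dictionary'' is not where your proof is safe; it is where the paper's actual work lives.

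Apart from this, your skeleton is the paper's: quasinilpotent corner, invertible corner, Neumann series solving the Sylvester equation, the freeness/rotation-invariance evaluation of $\tau(G^*G)$ with only the $k=0$ term retained, and the Jensen/Fuglede--Kadison bound $\tau_{1-p}\bigl(|Z_{22}^{-1}|^2\bigr)\ge s^{-2}$ (this is exactly Corollary~5.5 of~\cite{DKU}, which the paper just cites). Your ending is, in fact, tidier than the paper's: the exact identity $\cos\alpha=(1+\norm{G}^{-2})^{-1/2}$ combined with $\norm{G}^2\ge\tau(G^*G)/\min(t,1-t)$ (valid since $G^*G$ and $GG^*$ sit in the two corners) yields the $\max(t,1-t)$ bound from a single triangularization, whereas the paper runs two symmetric constructions --- one for each ordering of the corners --- and extracts the bounds $\bigl(1+s^2/(c^2t)\bigr)^{-1/2}$ and $\bigl(1+s^2/(c^2(1-t))\bigr)^{-1/2}$ separately from one test vector each. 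If you patch the identification step above, your argument is a correct and slightly cleaner rendering of the same proof.
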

\begin{proof}
Let $\mu_2$ denote the probability measure that is the renormalized restriction of $\mu$ to $A(s',s)$.
Then by Theorem~4.2 of~\cite{DKU}, there exists an example of a $\DT(\mu,c)$-operator
\begin{equation}\label{eq:Zmat}
Z=Z_1+pX(1-p)+Z_2= \left( \begin{matrix} Z_1 &  c p X (1-p) \\ 0 & Z_2 \end{matrix} \right),
\end{equation}
where
\begin{itemize}
\item[$\bullet$]
$p \in \Mcal$ is a projection with $\tau(p)=t$,
\item[$\bullet$]
$Z_1 \in p\Mcal p$ is, with respect to the trace $\tau_p=t^{-1}\tau\restrict_{p\Mcal p}$,
a $\DT(\delta_0,c\sqrt t)$-operator whose measure, $\delta_0$, is the Dirac mass at $0$,
\item[$\bullet$]
$Z_2 \in (1-p)\Mcal(1-p)$ is, with respect to the trace $\tau_{(1-p)}=(1-t)^{-1}\tau\restrict_{(1-p)\Mcal(1-p)}$,
a $\DT(\mu_2,c\sqrt{1-t})$-operator,
\item[$\bullet$]
$X\in\Mcal$ is a semicircular operator with $\tau(X^2)=1$
and so that $X$ and $\{Z_1,Z_2,p\}$ are $*$-free.
\end{itemize}
In the right most expression in~\eqref{eq:Zmat}, we are using the matrix notation
\[
\Mcal=\left(\begin{matrix} p\Mcal p & p\Mcal(1-p) \\ (1-p)\Mcal p & (1-p)\Mcal(1-p) \end{matrix}\right)_.
\]

Regard $Z$ as an operator acting on the Hilbert space $\HEu=L^2(\Mcal) =L^2(\Mcal,\tau)$.
For $x \in \Mcal$, let $\hat{x}$ denote the corresponding vector in $L^2(\Mcal)$. 
Since DT operators have spectra equal to the supports of their Brown measures (see Theorem 4.4 of~\cite{DKU}), $Z_2$ is invertible in $(1-p)\Mcal (1-p)$
and $Z_1$ is a quasinilpotent operator.
Since for every $\eps>0$, for all $k$ large enough we have $\|Z_1^k\|\le \eps^k$,
the series
\[
c \sum_{k=0}^n Z_1^k \left( p X (1-p) \right) Z_2^{-k-1} 
\]
converges in operator norm to some element $Y \in \Mcal$. Since $pZ_1=Z_1$ and $Z_2(1-p)=Z_2$, we have $Y=pY(1-p)$.
A direct computation shows that
\[ S=1+Y=\begin{pmatrix} p & Y \\ 0 & 1-p \end{pmatrix} \]
is invertible in $\Mcal$, with inverse
\[ S^{-1}=1-Y=\begin{pmatrix} p & -Y \\ 0 & 1-p \end{pmatrix}_. \]
Moreover, we get
\begin{equation}\label{eq:SZSinv}
S \begin{pmatrix} Z_1 & 0 \\  0 & Z_2 \end{pmatrix} S^{-1} =
\begin{pmatrix} Z_1 & YZ_2-Z_1Y \\ 0 & Z_2 \end{pmatrix}=
\begin{pmatrix} Z_1 & cpX(1-p) \\ 0 & Z_2 \end{pmatrix}=Z.
\end{equation}

Since $\sigma(Z_1)=\{0\}$ and $\sigma(Z_2)\subseteq A(s',s)$ (where the spectra are computed in the corners $p\Mcal p$ and $(1-p)\Mcal (1-p)$, respectively), we have 
\begin{gather*} 
P\left( \begin{pmatrix} Z_1 & 0 \\  0 & Z_2 \end{pmatrix}, \{0\} \right)\HEu = p\HEu, \\
P\left( \begin{pmatrix} Z_1 & 0 \\  0 & Z_2 \end{pmatrix}, A(s',s) \right)\HEu = (1-p)\HEu.
\end{gather*}
Using equation \eqref{eq:SZSinv} and Proposition~2.6(iii) of~\cite{DKU}, we get
\begin{equation}\label{eq:Arr'}
P(Z,\{0\}) \HEu = S \left(P\left( \begin{pmatrix} Z_1 & 0 \\  0 & Z_2 \end{pmatrix}, \{0\} \right) \HEu \right) = S (p\HEu) = p\HEu, 
\end{equation}
and
\begin{multline}
P(Z,A(s',s))\HEu = S \left( P\left( \begin{pmatrix} Z_1 & 0 \\  0 & Z_2 \end{pmatrix}, A(s',s) \right) \HEu \right)  = S((1-p)\HEu) \\
= \left \{ \begin{pmatrix} Y \eta  \\ \eta \end{pmatrix}: \eta \in (1-p)\HEu \right \}_.  \label{eq:As's}
\end{multline}
Let $\eta = \widehat{1-p}$. Then 
\[ Y\eta = c\sum_{k=0}^\infty Z_1^k(pX(1-p))Z_2^{-k-1} \widehat{1-p}_. \]
From the radial symmetry of the measure $\delta_0$, it follows that $Z_1$ and $\lambda Z_1$ have the same $*$-moments for every complex number $\lambda$ of modulus $1$.
Thus, $\tau((Z_1^j)^*Z_1^k)=0$ whenever $k$ and $j$ are nonnegative integers with $j\ne k$.
Now using $*$-freeness of $X$ and $\{Z_1,Z_2,p\}$, we calculate, for each $n\ge0$,
\begin{align*}
&\left\|c \sum_{k=0}^n Z_1^k(pX(1-p))Z_2^{-k-1}\widehat{(1-p)}\right\|_2^2 \notag \\ 
&=c^2 \sum_{0\le k_1,k_2\le n}\tau(\big(1-p)(Z_2^{-k_1-1})^*(1-p)Xp(Z_1^{k_1})^*Z_1^{k_2}pX(1-p)Z_2^{-k_2-1}\big) \notag \\
&=c^2 \sum_{0\le k_1,k_2\le n}\tau(1-p)\tau_{(1-p)}((Z_2^{-k_1-1})^*Z_2^{-k_2-1})\tau(X^2)\tau(p(Z_1^{k_1})^*Z_1^{k_2}p) \notag \\
&=c^2 \tau(p)\tau(1-p)\sum_{k=0}^n\tau_{(1-p)}((Z_2^{-k-1})^*Z_2^{-k-1})\tau_p((Z_1^k)^*Z_1^k) \\
&\ge c^2\tau(p)\tau(1-p)\tau_{(1-p)}((Z_2^{-1})^*Z_2^{-1}).
\end{align*}
From Corollary~5.5 of~\cite{DKU},
we have
\[\tau_{(1-p)} \left(  (Z_2^{*})^{-1} Z_2^{-1} \right) \geq s^{-2}. \]
Thus, we get
\begin{equation}
\| Y\eta \|_2^2 \geq c^2 t (1-t)s^{-2}. \label{eq:xiunder}
\end{equation}
From \eqref{eq:Arr'} and \eqref{eq:As's}, we have 
\[ \left( \begin{matrix}Y \eta \\ \eta \end{matrix} \right) \in P(Z,A(s',s)), \qquad \left( \begin{matrix} Y \eta \\ 0 \end{matrix} \right) \in P(Z,\{0\}). \]
Computing the cosine of the angle between $ \left( \begin{matrix}Y \eta \\ \eta \end{matrix} \right)$ and $ \left( \begin{matrix} Y \eta \\ 0 \end{matrix} \right)$
and using the lower bound~\eqref{eq:xiunder} gives 
\begin{multline}
\cos \left( \alpha(P(Z,\{0\}),P(Z,A(s',s)) \right) \geq \frac{ \| Y \eta \| }{((1-t)+\| Y \eta \|^2)^{1/2}} \\
=\left(1+\frac{1-t}{\|Y \eta\|^2}\right)^{-1/2} \geq \left(1+\frac{s^2}{c^2 t}\right)^{-1/2}_. \label{eq:cosalphat}
\end{multline}

In a similar fashion,
but now using the matrix notation
\[
\Mcal=\left(\begin{matrix}  (1-p)\Mcal(1-p) & (1-p)\Mcal p \\ p\Mcal(1-p) & p\Mcal p \end{matrix}\right)_,
\]
we find also an example of a $\DT(\mu,c)$ operator 
\[ Z = \left( \begin{matrix} Z_2 & c (1-p) X p \\ 0 & Z_1 \end{matrix} \right), \]
where
$p \in \Mcal$ is a projection with $\tau(1-p)=1-t$,  $Z_1 \in p\Mcal p$ is a $\DT(\delta_0,c \sqrt t)$-operator, $Z_2 \in (1-p)\Mcal(1-p)$ is a $\DT(\mu_2,c \sqrt{1-t})$-operator and $X$ is a semicircular operator with $\tau(X^2)=1$
and so that $X$ and $\{Z_1,Z_2,p\}$ are $*$-free.
With
\[
Y= -\sum_{k=0}^\infty Z_2^{-k-1}(c(1-p)Xp)Z_1^k
\]
and
\[
S = \begin{pmatrix} (1-p) & Y \\ 0 &  p \end{pmatrix}_,
\]
we have 
\[ 
S \begin{pmatrix} Z_2 & 0 \\ 0 & Z_1 \end{pmatrix} S^{-1} = Z.
\]
By similar reasoning to the above, we get
\[ \left( \begin{matrix}Y \widehat{p} \\ \widehat{p} \end{matrix} \right) \in P(Z,\{0\}), \qquad \left( \begin{matrix} Y \widehat{p} \\ 0 \end{matrix} \right) \in P(Z,A(s',s)). \]
Again using the rotational invariance of the $*$-moments of $Z_1$, we get the estimate
\[
\|Y\widehat{p}\|^2\ge c^2t(1-t)s^{-2}.
\]
This implies
\begin{equation}\label{eq:cosalpha1-t}
\cos \left( \alpha(P(Z,\{0\}),P(Z,A(s',s)) \right) \geq \left(1+\frac{s^2}{c^2 (1-t)}\right)^{-1/2}_.
\end{equation}
Combining the inequalities \eqref{eq:cosalphat} and \eqref{eq:cosalpha1-t} gives us the desired bound.
\end{proof}

\begin{thm}\label{thm:atom}
Let $\mu$ be a compactly supported Borel probability measure on $\Cpx$ and let $c>0$.
Let $Z$ be a $\DT(\mu,c)$ operator.
Suppose that  $\mu$ has an atom at a point that is an accumulation point of $\supp\mu$.
Then $Z$ fails to have the nonzero angle property (NZA) and is not a spectral operator.
\end{thm}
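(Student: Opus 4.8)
The plan is to exhibit a single Borel set --- the atom itself --- whose two complementary Haagerup--Schultz projections subtend a zero angle; this refutes NZA, and non-spectrality then follows from the implication (spectral $\Rightarrow$ UNZA $\Rightarrow$ NZA) recorded in the introduction via~\cite{DKU21}. First I would reduce to the case where the atom sits at the origin: if the atom is at $z_0$, then $Z-z_0$ is a $\DT(\mu',c)$-operator whose measure $\mu'$ is $\mu$ translated by $-z_0$, and since $P(Z-z_0,B)=P(Z,B+z_0)$ the family of angles, and hence NZA, is unchanged. So assume $\mu(\{0\})=t_0$ with $0<t_0<1$ (the upper bound because $0$ is an accumulation point, so other mass is present) and that $0$ is an accumulation point of $\supp\mu$. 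The accumulation hypothesis gives, for every $s>0$, a point of $\supp\mu$ in the punctured disk $\{0<|z|<s\}$, whence $\mu(\{0<|z|\le s\})>0$; letting $s'\downarrow 0$ in $\mu(A(s',s))\uparrow\mu(\{0<|z|\le s\})$ produces, for each such $s$, some $0<s'<s$ with $\mu(A(s',s))>0$.

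Fix such a pair $s'<s$, set $C=\{0\}\cup A(s',s)$, and let $q=P(Z,C)$. The heart of the argument is to compress to $C$: the compression $Z_q$ of $Z$ to $q\HEu$, taken in $q\Mcal q$ with renormalized trace, is a $\DT(\mu_q,c_q)$-operator with $\mu_q$ the renormalized restriction of $\mu$ to $C$ and $c_q=c\,\mu(C)^{1/2}$, and, crucially, its Haagerup--Schultz projections localize, i.e.\ $P(Z_q,D)=P(Z,D)$ for Borel $D\subseteq C$. Since $\mu_q$ is concentrated on $\{0\}\cup A(s',s)$ and $\mu_q(\{0\})=t_0/\mu(C)\in(0,1)$, Lemma~\ref{lem:0A} applies to $Z_q$; substituting $c_q^2=c^2\mu(C)\ge c^2 t_0$ into its second, cruder bound and using localization to replace $Z_q$ by $Z$ yields
\[
\cos\bigl(\alpha(P(Z,\{0\}),P(Z,A(s',s)))\bigr)\ge\Bigl(1+\tfrac{2s^2}{c_q^2}\Bigr)^{-1/2}\ge\Bigl(1+\tfrac{2s^2}{c^2 t_0}\Bigr)^{-1/2}.
\]

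Finally I would pass from the annulus to the full complement and let $s\to 0$. Because $A(s',s)\subseteq\{0\}^c$, monotonicity of the Haagerup--Schultz lattice gives $P(Z,A(s',s))\le P(Z,\{0\}^c)$, and enlarging the second subspace can only increase the cosine of the (minimal) angle; hence the left-hand side above is at most $\cos\bigl(\alpha(P(Z,\{0\}),P(Z,\{0\}^c))\bigr)$. The latter quantity does not depend on $s$, while the right-hand side tends to $1$ as $s\to 0$, so $\cos\bigl(\alpha(P(Z,\{0\}),P(Z,\{0\}^c))\bigr)=1$, the angle is $0$, and $Z$ fails NZA, hence is non-spectral. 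The main obstacle is the compression step: one must justify both that the compression of a DT-operator to one of its Haagerup--Schultz projections is again a DT-operator with the renormalized restricted Brown measure and scaled parameter, and that these projections localize as $P(Z_q,D)=P(Z,D)$ for $D\subseteq C$. Both are properties of the Haagerup--Schultz calculus and of DT-operators available from~\cite{DH04a,HS09,DKU}; a plain upper-triangular split by modulus does not suffice, since $C^c$ meets both $\{|z|<s'\}$ and $\{|z|>s\}$, which is exactly why the general compression and localization are needed in place of the block form used in Lemma~\ref{lem:0A}.
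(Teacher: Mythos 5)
Your proof is correct and follows essentially the same route as the paper's: translate the atom to the origin, compress to $q=P(Z,\{0\}\cup A(s',s))$ via Proposition~4.4(iii) of~\cite{DKU}, apply Lemma~\ref{lem:0A} to the compressed operator, transfer the cosine bound back to $Z$ (the paper packages your localization-plus-monotonicity step as Lemma~6.1 of~\cite{DKU}), and let $s\to0$ to kill the angle between $P(Z,\{0\})$ and $P(Z,\Cpx\setminus\{0\})$. The only cosmetic difference is that the paper works with a sequence $s_n\to0$ rather than a single $s$ sent to $0$.
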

\begin{proof}
Since translating $\mu$ by a complex number $\lambda$ amounts to replacing $Z$ by $Z+\lambda I$, we may without loss of generality assume that $\mu$ has an atom at $0$
and that $0$ is an accumulation point of $\supp\mu$.
We will show
\begin{equation}\label{eq:PZ0}
\alpha(P(Z,\{0\}),P(Z,\Cpx\setminus\{0\}))=0,
\end{equation}
which gives immediately that $Z$ fails the NZA.
Since $0$ is an accumulation point of $\supp\mu$, there are $0<s_n'<s_n$ such that $\lim_{n\to\infty}s_n=0$ and such that for every $n$, $\mu(A(s_n',s_n))>0$.
For each $n$, let $B_n=\{0\}\cup A(s_n',s_n)$ and let $q_n=P(Z,B_n)$.
Let $\mut_n$ be the probability measure that is the renormalized restriction of $\mu$ to $B_n$.
Then, by Proposition 4.4(iii) of~\cite{DKU}, $Zq_n$ is a $\DT(\mut_n,c\sqrt{\mu(B_n)})$-operator.
Thus, $P(Zq_n,\Cpx\setminus\{0\})=P(Zq_n,A(s_n',s_n))$.
Using Lemma 6.1 of~\cite{DKU} and Lemma~\ref{lem:0A}, we get
\begin{multline*}
\cos(\alpha(P(Z,\{0\}),P(Z,\Cpx\setminus\{0\})) \\
\ge\cos(\alpha_{q_n\Mcal q_n}(P(Zq_n,\{0\}),P(Zq_n,\Cpx\setminus\{0\})))
\ge\left(1+\frac{2s_n^2}{c^2\mu(B_n)}\right)^{-1/2}.
\end{multline*}
Since $\mu(B_n)>\mu(\{0\})>0$ and $s_n\to0$, letting $n\to\infty$ proves~\eqref{eq:PZ0}.
\end{proof}

\begin{thm}\label{thm:manyatoms}
Let $\mu$ be a compactly supported Borel probability measure on $\Cpx$ and let $c>0$.
Let $Z$ be a $\DT(\mu,c)$ operator.
Suppose that $\mu$ has atoms at distinct points $a_1,a_2,\ldots\in\Cpx$.
Let $t_n=\mu(\{a_n\})$.
For each $n$, choose $d_n>0$ such that
\[
m_n:=\mu(\{z\in\Cpx\mid 0<|z-a_n|\le d_n\})>0.
\]
\begin{enumerate}[(a)]
\item\label{enum:dtm} If
\begin{equation}\label{eq:dmt}
\inf_{n\ge1}\frac{d_n^2}{m_n+t_n}=0,
\end{equation}
then $Z$ fails to have the uniformly nonzero angle property (UNZA) and is not a spectral operator.
\item If
\begin{equation}\label{eq:dt}
\inf_{n\ge1}\frac{d_n^2}{t_n}=0,
\end{equation}
then $Z$ fails to have the nonzero angle property (NZA) and is not a spectral operator.
\end{enumerate}
\end{thm}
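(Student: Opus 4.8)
The plan is to derive both parts from a single \emph{local separation estimate} obtained by compressing $Z$ near one atom and applying Lemma~\ref{lem:0A}. Fix an atom $a=a_n$ and a Borel set $R$ with $\mu(R)>0$ that is contained in an annulus $\{z:s'\le|z-a|\le s\}$ with $0<s'\le s\le d_n$, so that $R$ is bounded away from $a$. After translating $a$ to $0$, put $q=P(Z,\{a\}\cup R)$; by Proposition~4.4(iii) of~\cite{DKU} the operator $Zq$ is a $\DT$-operator whose Brown measure is the renormalized restriction of $\mu$ to $\{a\}\cup R$, hence is concentrated in $\{a\}\cup\{z:s'\le|z-a|\le s\}$. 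Applying Lemma~\ref{lem:0A} in the corner $q\Mcal q$ (with atom-mass fraction $t_n/(t_n+\mu(R))$ and parameter $c\sqrt{t_n+\mu(R)}$), then Lemma~6.1 of~\cite{DKU} to pass to the uncompressed angle, and finally monotonicity of Haagerup--Schultz projections (giving $P(Z,\{a\})\le P(Z,B)$ and $P(Z,R)\le P(Z,B^c)$ whenever $\{a\}\subseteq B$ and $R\subseteq B^c$), I would obtain
\begin{equation}\label{eq:local}
\cos\bigl(\alpha(P(Z,B),P(Z,B^c))\bigr)\ge\cos\bigl(\alpha(P(Z,\{a\}),P(Z,R))\bigr)\ge\Bigl(1+\frac{s^2}{c^2\max(t_n,\mu(R))}\Bigr)^{-1/2},
\end{equation}
whose right-hand side is in turn at least $(1+d_n^2/(c^2t_n))^{-1/2}$.

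For part~(a) I do not need a single set. Passing to a subsequence along which $d_n^2/(m_n+t_n)\to0$, for each such $n$ I choose $s_n'>0$ small enough that the annulus $R_n=\{z:s_n'\le|z-a_n|\le d_n\}$ captures at least half of the nearby mass, $\mu(R_n)\ge m_n/2$; then $\max(t_n,\mu(R_n))\ge\tfrac14(m_n+t_n)$. Taking $B=\{a_n\}$ in~\eqref{eq:local}, and noting that the compressed measure lives on $\{a_n\}\cup R_n$ so that $P(Zq,\Cpx\setminus\{a_n\})=P(Zq,R_n)$, gives
\[
\cos\bigl(\alpha(P(Z,\{a_n\}),P(Z,\Cpx\setminus\{a_n\}))\bigr)\ge\Bigl(1+\frac{4d_n^2}{c^2(m_n+t_n)}\Bigr)^{-1/2}\longrightarrow1.
\]
Since each $\{a_n\}$ has $0<\mu(\{a_n\})=t_n<1$, these angles tend to $0$ and $Z$ fails the UNZA, hence is non-spectral.

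Part~(b) is the harder one, since failing the NZA requires producing a \emph{single} Borel set $B$ with $\alpha(P(Z,B),P(Z,B^c))=0$. I would pass to a subsequence $S$ of atoms with $d_n^2/t_n\to0$, so in particular $d_n\to0$, each $a=a_n\in S$ carrying positive mass $m_n>0$ in its punctured disk $\{z:0<|z-a|\le d_n\}$. If infinitely many $a\in S$ have some non-atomic mass within radius $d_a$, I take $B=S$: for each such $a$ a thin annulus minus the (countable) atom set has positive continuous mass, lies in $B^c$ because continuous mass ignores $S$, and yields~\eqref{eq:local} with $s\le d_a$. Otherwise all but finitely many $a\in S$ have purely atomic nearby mass, so each has a nearby atom $b(a)\ne a$ with $t_{b(a)}>0$ and $|b(a)-a|\le d_a$. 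I then look at the graph on the atoms with edges $\{a,b(a)\}$: if it has an infinite matching I keep disjoint pairs $\{a_i,b_i\}$ and set $B=\{a_i\}$, so that each $b_i\in B^c$ and~\eqref{eq:local} applies with $R_{a_i}=\{b_i\}$; if its matching number is finite, a maximal matching gives a finite vertex cover, so some atom $v$ satisfies $v=b(a)$ for infinitely many $a\in S$, and as $d_a\to0$ these $a$ converge to $v$, exhibiting an atom at an accumulation point of $\supp\mu$, whence Theorem~\ref{thm:atom} already finishes. In every case~\eqref{eq:local} supplies infinitely many $a\in S$ with
\[
\cos\bigl(\alpha(P(Z,B),P(Z,B^c))\bigr)\ge\Bigl(1+\frac{d_a^2}{c^2t_a}\Bigr)^{-1/2}\longrightarrow1,
\]
so that $\alpha(P(Z,B),P(Z,B^c))=0$ and $Z$ fails the NZA.

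The step I expect to be the main obstacle is precisely the single-set requirement in part~(b): estimate~\eqref{eq:local} is local to one atom, so the real work is organizing the atoms of $\mu$ so that infinitely many can be separated \emph{simultaneously} by one Borel set $B$, and verifying that the only obstruction---a single atom serving as the near neighbour of infinitely many atoms of $S$---is exactly the clustering that Theorem~\ref{thm:atom} already handles. A secondary point to check is that in the reduction behind~\eqref{eq:local} the region $R$ must be kept bounded away from the atom (inner radius $s'>0$), so that the relevant corner operator is invertible and Lemma~\ref{lem:0A} genuinely applies; this is why I work with annuli rather than full punctured disks throughout.
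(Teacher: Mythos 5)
Your proposal is correct. Part (a) coincides with the paper's own argument: compress to $q=P(Z,B_n'\cup\{a_n\})$ where the annulus $B_n'$ captures at least half of $m_n$, apply Lemma~\ref{lem:0A} after translating $a_n$ to the origin, and return to the ambient angle via Lemma~6.1 of \cite{DKU}; your constant $\bigl(1+4d_n^2/(c^2(m_n+t_n))\bigr)^{-1/2}$ is exactly the paper's. The interesting comparison is part (b), where you take a genuinely different route. The paper has no case analysis: it passes to a subsequence along which $d_n^2/t_n\to0$, the $a_n$ converge (say to $0$) and lie in a sector of angle $\pi/3$ with $|a_{n+1}|<|a_n|$; the law of cosines gives $|a_n-a_{n+1}|<|a_n|$, so the $d_n$ can be shrunk to satisfy $d_n<|a_n|$ while keeping $m_n>0$, and a further subsequence arranges $|a_{n+1}|<|a_n|-d_n$. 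This geometric separation makes the atom set $A=\{a_n\}$ disjoint from the union $B$ of the punctured disks of radii $d_n$, and the local estimate applied at each $a_n$ (against an annulus inside $B$) yields $\alpha(P(Z,A),P(Z,B))=0$, hence failure of NZA with the single set $A$, uniformly in all cases. You instead organize the atoms combinatorially: when infinitely many $a\in S$ have continuous nearby mass you take $B=S$ and test against atom-free annuli; otherwise you form the near-neighbour graph and use the matching/vertex-cover dichotomy, with the finite-matching case reduced to Theorem~\ref{thm:atom} because a single atom $v$ with $v=b(a)$ for infinitely many $a$ (and $d_a\to0$, which follows from $d_a^2/t_a\to0$ and $t_a\le1$) is an atom at an accumulation point of $\supp\mu$. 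This is sound, and there is no circularity since Theorem~\ref{thm:atom} is proved before and independently of this theorem. The trade-off: the paper's subsequence geometry is self-contained and produces one explicit separating pair $(A,B)$ covering every configuration, while your route avoids all metric work (sectors, law of cosines, recursive gap conditions) at the price of a three-way case split and the appeal to Theorem~\ref{thm:atom}. One cosmetic point for a final write-up: Lemma~\ref{lem:0A} requires $0<s'<s$ strictly, so in the matched-pair case $R=\{b_i\}$ you should take, say, $s'=|b_i-a_i|/2$ and $s=d_{a_i}$ rather than a degenerate annulus.
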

\begin{proof}
Fix $n$ and let $d_n'>0$ be such that, letting
\[
B_n'=\{z\in\Cpx\mid d_n'<|z-a_n|\le d_n\},
\]
we have $\mu(B_n')\ge m_n/2$.
Let $q=P(Z,B_n'\cup\{a_n\})$.
Let $\mut$ be the renormalized restriction of $\mu$ to $B_n'\cup\{a_n\}$.
Then, by Proposition 4.4(iii) of~\cite{DKU}, $Zq$ is, with respect to the renormalized trace $\tau(q)^{-1}\tau\restrict_{q\Mcal q}$,
a $DT(\mut,c\sqrt{\mu(B_n')+t_n})$-operator.
Using Lemma 6.1 of~\cite{DKU} and (after translating by $a_n$) Lemma~\ref{lem:0A}, we get
\begin{multline*}
\cos(\alpha(P(Z,\{a_n\}),P(Z,\Cpx\setminus\{a_n\})) \\
\ge\cos(\alpha_{q\Mcal q}(P(Zq,\{a_n\}),P(Zq,\Cpx\setminus\{a_n\})))
\ge\left(1+\frac{2d_n^2}{c^2(\mu(B_n')+t_n)}\right)^{-1/2} \\
\ge\left(1+\frac{2d_n^2}{c^2((m_n/2)+t_n)}\right)^{-1/2}
\ge\left(1+\frac{4d_n^2}{c^2(m_n+t_n)}\right)^{-1/2}_.
\end{multline*}
Now the hypothesis~\eqref{eq:dmt} implies that
\[
\inf_n\alpha(P(Z,\{a_n\}),P(Z,\Cpx\setminus\{a_n\}))=0
\]
and, thus, that
$Z$ fails to have the UNZA property.

Now suppose~\eqref{eq:dt} holds.
By passing to a subsequence, we may assume without loss of generality that $d_n^2/t_n$ converges to $0$ and $a_n$ converges to a complex number as $n\to\infty$.
After translation, we may without loss of generality assume $a_n$ converges to $0$.
By passing to a subsequence, we may without loss of generality assume that all the $a_n$ lie in some sector of the complex plane having angle $\frac\pi3$.
For example, after rotation, we may without loss of generality assume $a_n=|a_n|e^{i\theta_n}$ with $0\le\theta_n\le \pi/3$ for all $n$.
Passing again to a subsequence, we may without loss of generality assume $|a_{n+1}|<|a_n|$, for all $n$.
By the law of cosines, 
\begin{align*}
|a_n-a_{n+1}|^2&=|a_n|^2+|a_{n+1}|^2-2|a_n|\,|a_{n+1}|\cos(\theta_n-\theta_{n+1}) \\
&\le|a_n|^2+|a_{n+1}|^2-2|a_n|\,|a_{n+1}|\cos(\frac\pi3) \\
&=|a_n|^2+|a_{n+1}|(|a_{n+1}|-|a_n|)<|a_n|^2.
\end{align*}
Since
\[
\mu(\{z\in\Cpx\mid0<|z-a_n|\le|a_n-a_{n+1}|\})\ge\mu(\{a_{n+1}\})>0,
\]
we may, by decreasing $d_n$ if necessary, take $d_n<|a_n|$ for all $n$.
Now by passing to a subsequence, we may without loss of generality assume 
\[
|a_{n+1}|<|a_n|-d_n
\]
for all $n$, which also implies
\[
|a_{n+k}|<|a_n|-d_n
\]
for all $n,k\ge1$.
In particular, we have
\begin{equation}\label{eq:anank}
|a_n-a_{n+k}|\ge|a_n|-|a_{n+k}|>d_n>d_{n+k}
\end{equation}
for all $n,k\ge1$.
Let
\begin{align*}
A&=\{a_n\mid n\in\Nats\} \\
B&=\bigcup_{n=1}^\infty\{z\in\Cpx\mid0<|z-a_n|<d_n\}.
\end{align*}
From~\eqref{eq:anank}, it follows that $A$ and $B$ are disjoint sets.
We will show
\begin{equation}\label{eq:PZAB}
\alpha(P(Z,A),P(Z,B))=0,
\end{equation}
which will imply $\alpha(P(Z,A),P(Z,A^c))=0$ and, thus, that $Z$ fails to have the NZA.
We argue as in the proof of the first part of the theorem.
Fixing $n$, we choose $d_n'>0$ such that, letting
\[
B_n'=\{z\in\Cpx\mid d_n'<|z-a_n|\le d_n\},
\]
we have $\mu(B_n')>0$.
Let $q=P(Z,B_n'\cup\{a_n\})$
and let $\mut$ be the renormalized restriction of $\mu$ to $B_n'\cup\{a_n\}$.
Then, by Proposition 4.4(iii) of~\cite{DKU}, $Zq$ is, with respect to the renormalized trace $\tau(q)^{-1}\tau\restrict_{q\Mcal q}$,
a $DT(\mut,c\sqrt{\mu(B_n')+t_n})$-operator.
Using Lemma 6.1 of~\cite{DKU} and (after translating by $a_n$) Lemma~\ref{lem:0A}, we get
\begin{multline*}
\cos(\alpha(P(Z,A),P(Z,B))
\ge\cos(\alpha(P(Z,\{a_n\}),P(Z,B_n'))) \\
\ge\cos(\alpha_{q\Mcal q}(P(Zq,\{a_n\}),P(Zq,\Cpx\setminus\{a_n\}))) \\
\ge\left(1+\frac{2d_n^2}{c^2(\mu(B_n')+t_n)}\right)^{-1/2}
\ge\left(1+\frac{2d_n^2}{c^2t_n}\right)^{-1/2}.
\end{multline*}
Letting $n\to\infty$ and using~\eqref{eq:dt} proves~\eqref{eq:PZAB}.

In both cases, failing either the UNZA or the NZA, $Z$ cannot be a spectral operator.
\end{proof}

\begin{example}\label{ex:yes}
Here is a class of probability measures $\mu$, each with countable support and
satisfying the hypotheses of Theorem~\ref{thm:manyatoms}(b), while
failing to satisfy the hypotheses of Theorem~\ref{thm:atom}.
Thus, the corresponding $DT(\mu,c)$-operators all fail to satisfy the NZA and are non-spectral.

Take the probability measure
\[
\mu=C_p\sum_{n=1}^\infty \frac1{n^p}\delta_{1/n},
\]
for $p>1$ and the suitable constant $C_p$.
Then $0$ is the only accumulation point of $\supp(\mu)$ and $\mu$ has no atom at $0$, so the hypotheses of Theorem~\ref{thm:atom} fail to hold.

However, in the notation of Theorem~\ref{thm:manyatoms},
letting $a_n=1/n$ and taking
\[
d_n=\frac1n-\frac1{n+1}=\frac1{n(n+1)},
\]
we see that $m_n>0$.
Of course, we have
$t_n=\frac{C_p}{n^p}$
and
\[
\frac{d_n^2}{t_n}\le\frac{n^{p-4}}{C_p}.
\]
If $1<p<4$, then the hypotheses of Theorem~\ref{thm:manyatoms}(b) are satisfied.
\end{example}

\begin{example}\label{ex:AnotB}
The following probability measure satisfies the hypothesis of Theorem~\ref{thm:manyatoms}(a) but fails to satisfy the hypotheses of Theorem~\ref{thm:manyatoms}(b) and Theorem~\ref{thm:atom}. The corresponding $DT(\mu,c)$-operator therefore fails to satisfy the UNZA property and is not spectral, but we do not know whether it has the NZA property.

Let $a_n=\sum_{k=1}^n \frac{\log k}{k^2}$, $t_n = C n^{-2}$ with $C=6/\pi^2$. Consider the measure
\[
\mu = \sum_{n=1}^\infty t_n \delta_{a_n}.
\]
Note that $\sum_{k=1}^\infty \frac{\log k}{k^2}$ is the only accumulation point of  $\supp(\mu)$, but $\mu$ has no atom there, so Theorem~\ref{thm:atom} does not apply in this case.
In the notation of Theorem~\ref{thm:manyatoms}, we need $d_n \geq a_{n+1}-a_n = \frac{\log(n+1)}{n+1}$. Since $t_n= C n^{-2}$,
\[
\frac{d_n^2}{t_n} \geq\frac{(\log (n+1))^2 n^2}{C(n+1)^2},
\]
which is clearly bounded away from zero, so Theorem~\ref{thm:manyatoms}(b) fails to hold.

If we take
\[
d_n=\sum_{k=n+1}^\infty \frac{\log k}{k^2},
\]
then  $\{a_k: k\geq n \}$ is a subset of the ball of radius $d_n$ around $a_n$, and therefore
\[
m_n + t_n \geq  \sum_{k=n}^\infty t_k = \sum_{k=n}^\infty \frac{C}{k^2} \geq \frac C n .
\]
Since $n^{-2} \log n $ is decreasing for $n\geq 3$, we get the bound
\[
d_n\le\int_n^\infty\frac{\log x}{x^2}\,dx=\frac{1+\log n}n.
\]
Therefore,
\[
\frac{d_n^2}{m_n+t_n}  \leq\frac{(1+\log n)^2}{Cn}
\]
which tends to $0$ as $n\to\infty$.
Hence, the corresponding $DT(\mu,c)$-operator fails to have the UNZA property, and is not spectral.
\end{example}

\begin{example}\label{ex:no}
Here is a a measure $\mu$ that has infinitely many atoms but which fails to satisfy the hypotheses of Theorem~\ref{thm:atom} and of Theorem~\ref{thm:manyatoms}.
We don't know whether the corresponding $DT(\mu,c)$-operator has the UNZA or the NZA properties,
nor do we know whether it is spectral.

Let 
\[
\mu=\sum_{n=1}^\infty 2^{-n}\delta_{1/n}.
\]
Then $0$ is the only accumulation point of $\supp(\mu)$ and $\mu$ has no atom at $0$, so the hypotheses of Theorem~\ref{thm:atom} fail to hold.

Using the notation of Theorem~\ref{thm:manyatoms},
we let $a_n=1/n$.
In order to have $m_n>0$, we need
\begin{equation}\label{eq:dnmin}
d_n\ge\frac1{n(n+1)}.
\end{equation}
Suppose
\[
d_n=t-\frac1n,\quad\frac1k\le t<\frac1{k-1},
\]
for an integer $k\in\{1,2,\ldots, n\}$.
We will show that all possible choices of values for $d_n$, in finitely many different cases, yield lower bounds for
$\frac{d_n^2}{m_n+t_n}$ that stay away from $0$ as $n\to\infty$.

If $k=n$, then using~\eqref{eq:dnmin} we have $m_n=2^{-n-1}$ and
\[
\frac{d_n^2}{m_n+t_n}\ge\frac{2^{n+1}}{3n^2(n+1)^2},
\]
which tends to $+\infty$ as $n\to\infty$.

If $k\in\{1,\ldots,n-1\}$, then
\[
d_n\ge\frac1k-\frac1n=\frac{n-k}{nk}
\]
and
\[
m_n+t_n\le\sum_{j=k}^\infty2^{-j}=2^{-k+1}.
\]
Thus,
\[
\frac{d_n^2}{m_n+t_n}\ge\left(\frac{n-k}{nk}\right)^22^{k-1}=\frac1{2n^2}f_n(k)^2,
\]
where we let
\[
f_n(x)=\left(\frac{n-x}x\right)2^{x/2}.
\]
We are interested in finding the minimum value of $f_n$ on the interval $[1,n-1]$.
Using
\[
f_n'(x)=-\left(\frac{\log 2}2\right)\frac{2^{x/2}}{x^2}\left(x^2-nx+\frac{2n}{\log 2}\right),
\]
we are interested in the endpoints $x=1$ and $x=n-1$, as well as the critical points
\[
x=r_n:=\frac{n-\sqrt{n^2-\frac{8n}{\log2}}}2,\qquad x=s_n:=\frac{n+\sqrt{n^2-\frac{8n}{\log2}}}2.
\]
We compute
\[
\lim_{n\to\infty}r_n=\frac2{\log 2}
\]
and deduce that for sufficiently large $n$, both $r_n$ and $s_n=n-r_n$ lie in the interval $[1,n-1]$.
We also compute:
\begin{align*}
\lim_{n\to\infty}\frac1{2n^2}f(1)^2&=1, \\
\lim_{n\to\infty}\frac1{2n^2}f(r_n)^2&=\lim_{n\to\infty}\frac1{2n^2}\left(\frac{s_n}{r_n}\right)^22^{r_n}=\frac{(\log2)^2}8 2^{2/\log2}>0, \\
\lim_{n\to\infty}\frac1{2n^2}f(s_n)^2&=\lim_{n\to\infty}\frac1{2n^2}\left(\frac{r_n}{s_n}\right)^22^{s_n}=+\infty, \\
\lim_{n\to\infty}\frac1{2n^2}f(n-1)^2&=+\infty.
\end{align*}
This ensures that for all allowed choices of $d_n$,
\[
\liminf_{n\to\infty}\frac{d_n^2}{m_n+t_n}>0.
\]
Thus, the hypotheses of Theorem~\ref{thm:manyatoms} fail to hold.
\end{example}

\end{document}